\newtheorem{lem}{Lemma}
\newcommand{\yajing}[1]{{\color{red} #1}}
\newcommand{\april}[1]{{\color{ForestGreen} #1}}
\newcommand*{\Scale}[2][4]{\scalebox{#1}{$#2$}}%
\title{Decentralized Low-Rank State Estimation for Power Distribution Systems}
\author{April Sagan, Yajing Liu, Andrey Bernstein 
\thanks{A. Sagan is with  the Department of  Mathematical Sciences, Rensselaer Polytechnic Institute, Troy, NY 12180, {\small sagana@rpi.edu}, and is supported by the National Science Foundation under Grant Number DMS-1736326.}
\thanks{Y. Liu and A. Bernstein are with Power System Engineering Center, National Renewable Energy Laboratory, Golden, CO 80401, {\small \{yajing.liu, andrey.bernstein\}@nrel.gov.}}
}
\newtheorem{thm}{Theorem}
\begin{document}
\maketitle



\begin{abstract}
This paper considers the low-observability state estimation problem in power distribution networks and develops a decentralized state estimation algorithm leveraging the matrix completion methodology. Matrix completion has been shown to be an effective technique in state estimation that exploits the low dimensionality of the power system measurements to recover missing information. 
This technique can utilize an approximate (linear) load flow model, or it can be used with no physical models in a network where no information about the topology or line admittance is available.

The direct application of matrix completion algorithms requires solving a semi-definite programming (SDP) problem, which becomes infeasible for large networks. We therefore develop a  decentralized algorithm that capitalizes on the popular proximal alternating direction method of multipliers (proximal ADMM). The method allows us to distribute the computation among different areas of the network, thus leading to a scalable algorithm. By doing all computations at individual control areas and only communicating with neighboring areas, the algorithm eliminates the need for data to be sent to a central processing unit and thus increases efficiency and contributes to the goal of autonomous control of distribution networks. We illustrate the  advantages of the proposed algorithm numerically using standard IEEE test cases.
\end{abstract}

\section{Introduction}
The increased penetration of distributed energy resources (DERs) (such as solar photovoltaics, wind turbines, and energy storage systems) in power distribution systems introduces bi-directional power flows that impact the system responses to various types of disturbances. As a result, distribution system state estimation, the procedure of obtaining the voltage phasors of all the system buses at a given point in time, has become of central importance  to   maintain the normal and secure operating conditions. Unlike the transmission systems, the limited availability of real-time measurements from Supervisory Control and Data Acquisition (SCADA) systems as well as from Phasor Measurement Units (PMUs) renders the well-developed weighted least squares (WLS) method \cite{Abur2004} inapplicable. This is true due to the \emph{low-observability} condition, wherein the estimation problem is \emph{underdetermined}.

To address the low observability challenge, there are two standard approaches in the literature. In the first approach, sensor placement strategies are devised to derive additional measurements required for full observability \cite{singh2009, bhela2018, YCNN}. In the second approach, machine learning and signal processing tools are used to derive \emph{pseudo-measurements} using existing sensors and/or historical data \cite{Clements2011,manitsas2012, wu2013,zhou2019gradient}. Considering the high expenses of installing sensors and large error introduced by pseudo-measurements \cite{Clements2011},  there are recently several attempts in the literature to directly address the underdetermined estimation problem by leveraging the \emph{low-rank structure} of the measurements \cite{gao_wang2016,Genes2019,Liao2019,Schmitt}. These approaches can be viewed as \emph{regularized} WLS methods, wherein the regularization term is aimed at minimizing the rank of the data matrix. In particular, \cite{Schmitt} proposed a  distribution system state estimation method by augmenting the traditional matrix completion \cite{exactMatrixCompletion} with noise-resilient power flow constraints, and demonstrated that this constrained matrix completion method works well under low-observability conditions. In \cite{gao_wang2016, Liao2019}, the standard matrix completion method was applied to recover missing PMU data  and \cite{Genes2019} applied the standard matrix completion method with Bayesian estimation to recover missing low voltage distribution systems data.


In this paper, we start with the formulation of \cite{Schmitt}, in which the data matrix was built using a \emph{single snapshot} of data. Each row in the matrix represents a power system bus and each column represents a quantity relevant to that bus (e.g., voltage and power injection).
We extend this formulation to a \emph{multi-period} problem, leveraging the spatial and temporal correlation in the time-series PMU data. We then develop a  decentralized algorithm that capitalizes on the popular proximal alternating direction method of multipliers (proximal ADMM) \cite{proxADMM}. The method allows to distribute the computation among different areas of the network, thus leading to a scalable algorithm. By doing all computations at individual control areas and only communicating with neighboring areas, the algorithm eliminates the need for data to be sent to a central processing unit and thus increases efficiency and contributes to the goal of autonomous control of distribution networks.  We show algorithm's efficiency numerically using standard IEEE test systems.

The main contributions of this paper are as follows:
\begin{itemize}
    \item We develop a scalable decentralized matrix completion algorithm that avoids solving the computationally expensive semi-definite programming (SDP) problem, which becomes infeasible for large networks.
    \item Our method is able to leverage both spatial and temporal correlation simultaneously, in contrast to the existing literature.
    \item We show that the computation of the linear approximations to the load-flow equations can be efficiently decentralized due to the approximate block structure of the coefficient matrix. 
\end{itemize}

The rest of the paper is organized as follows. Section \ref{section:ProblemFormulate} poses the state estimation problem as a regularized matrix completion problem.  An approximation of the linear load flow model which allows for decentralized computations is derived  in Section \ref{Section:DecentralziedLinearPowerFlow}. Section~\ref{section:decentralized}  develops a decentralized proximal ADMM algorithm to solve the posed problem.
Numerical results are presented in Section \ref{section:numericalResults} and the paper is concluded in Section \ref{section:conclusions}.

\section{Problem Formulation}
\label{section:ProblemFormulate}
For the scope of this paper, 
we consider a general three-phase distribution network consisting of one slack bus and a given number of multi-phase $PQ$ buses. 
\subsection{Matrix Completion for State Estimation}
\label{subsection:ProblemFormulate_a}
Given a matrix known to be low rank (or approximately low rank) where {a small subset of entries are observed}, the low-dimensionality of the matrix can be exploited to reconstruct the missing entries. {Let $\Scale[0.9]{M\in\mathbb{R}^{m\times n}}$ ($m<n$) be the matrix that we wish to reconstruct, and let $\Scale[0.9]{\Omega\subset \{1, \ldots, m\} \times \{1, \ldots, n\}}$ denote the set of known elements of $\Scale[0.9]{M}$. Define the observation operator $\Scale[0.9]{{P}_\Omega:\mathbb{R}^{m\times n} \rightarrow \mathbb{R}^{m\times n}} $ as: $\Scale[0.9]{{P}_\Omega(M)_{ij}=
	M_{ij}}$ if  $(i,j) \in \Omega$; 0 otherwise.

 The goal is to recover $\Scale[0.9]{M}$ from $\Scale[0.9]{{P}_\Omega(M)}$, when the number of observation is much smaller than the number of entries in $\Scale[0.9]{M}$. Under the assumption that $\Scale[0.9]{M}$ is low rank, the problem is formulated as \cite{exactMatrixCompletion}}
\[
\Scale[0.9]{
\begin{aligned}
& \underset{X\in\mathbb{R}^{m\times n}}{\text{minimize}}
& & \mathrm{rank}(X) +\frac{\mu}{2} || {P}_\Omega(X)-{P}_\Omega(M)||_F^2,
\end{aligned}}
\]
{where $\Scale[0.9]{X}$ is the decision variable, rank($\Scale[0.9]{X}$) is the rank of  $\Scale[0.9]{X}$, and $\Scale[0.9]{\mu > 0}$ is a weight parameter. }

{Unfortunately, the above formulation is of no practical use because the rank minimization is NP-hard. The most common way to solve this problem is to minimize the nuclear norm instead, which is the tightest  convex relaxation of the rank function. Denote the nuclear norm of $\Scale[0.9]{X}$ by} 
$\Scale[0.9]{||X||_*=\sum_{i=1}^{m} \sigma_i(X)}$,
where $\Scale[0.9]{\sigma_i(X)}$ is the $i${th} {largest} singular value of $X$. 
The {heuristic optimization} problem is then formulated as 
\begin{equation} 
\Scale[0.9]{
\begin{aligned}
\label{relaxedMC}
& \underset{X\in\mathbb{R}^{m\times n}}{\text{minimize}}
& & ||X||_* +\frac{\mu}{2}|| {P}_\Omega(X)-{P}_\Omega(M)||_F^2.
\end{aligned}}
\end{equation}

{The authors of \cite{exactMatrixCompletion} proved that if $\Scale[0.9]{\Omega}$ is sampled uniformly at random from $\Scale[0.9]{M}$ and no noise is present, then with high probability, the solution to the convex optimization problem of minimizing the nuclear norm with the constraint $\Scale[0.9]{P_\Omega(X)=P_\Omega(M)}$ is exactly $\Scale[0.9]{M}$, provided that the number of known entries is larger than $\Scale[0.9]{C n^{1.2}\cdot \text{rank}(X)\cdot \log(n)}$ for some positive constant $\Scale[0.9]{C}$.  Additionally, the solution to \eqref{relaxedMC} has been shown to be sufficiently close to the original matrix $\Scale[0.9]{M}$ even when noise is present \cite{Chi2019}.}

Matrix completion has been shown to be effective in state estimation of power systems by \cite{Schmitt}, which formulated the data matrix $\Scale[0.9]{M}$ using single-period information. In this paper, we extend the formulation of \cite{Schmitt} to a multi-period problem, wherein  we set up the data matrix $\Scale[0.9]{M}$ by including power system measurements for a  consecutive time series.

 Assume that the voltage phasor and other measurements at the slack bus are known. Thus, we will only use voltage phasors and measurements at nonslack (PQ) buses to form the data matrix. {Let $\Scale[0.9]{\mathcal{P}}$ denote the set of phases at all nonslack buses and $\Scale[0.9]{|\mathcal{P}|}$ be the corresponding cardinality.} 
 The measurements we will use in this matrix are the real voltage, imaginary voltage, voltage magnitude, active and reactive power injection at each phase of the nonslack buses. Consider a time series $\Scale[0.9]{t=1,\ldots, T}$. Let $\Scale[0.9]{M^t}$ denote the measurement matrix at time $t$ {such that each column represents a phase and each row represents a quantity relevant to the phase. To be specific, for each phase $\Scale[0.9]{i\in\mathcal{P}}$,
the corresponding column of $\Scale[0.9]{M^t}$ is of the form}
\[\Scale[0.9]{\begin{aligned}
\left[\Re(v_i),\ \Im(v_i),\ |v_i|,\ \Re(s_i), \ \Im(s_i)\right]^{\intercal},
\end{aligned}}\]
where  $\Scale[0.9]{\Re(\cdot)}$ and $\Scale[0.9]{\Im(\cdot)}$ are the real part and imaginary part of a complex variable, respectively, $\Scale[0.9]{{\intercal}}$ is the transpose notation, $\Scale[0.9]{v=[v_1,\ldots, v_{|\mathcal{P}|}]^{\intercal}\in\mathbb{C}^{|\mathcal{P}|}}$ is the vector containing voltage phasors at each phase of nonslack buses, $\Scale[0.9]{s=[s_1,\ldots, s_{|\mathcal{P}|}]^{\intercal}\in\mathbb{C}^{|\mathcal{P}|}}$ is the vector of power injections at each phase of nonslack buses.
{Then the matrix $\Scale[0.9]{M}$ is constructed by 
\begin{equation}
\label{eqn:dataMatrix}
\Scale[0.9]{
\begin{aligned}
M=\left[M^1;\ M^2;\ \cdots; M^{T}\right] \in \mathbb{R}^{m \times n},
\end{aligned}}
\end{equation}
where  $\Scale[0.9]{T}$ is the  number of time steps and $\Scale[0.9]{m = 5T, n=|\mathcal{P}|}$.}

\begin{figure}
   \centering
   \includegraphics[scale=0.38,trim= 50 00 45 0, clip]{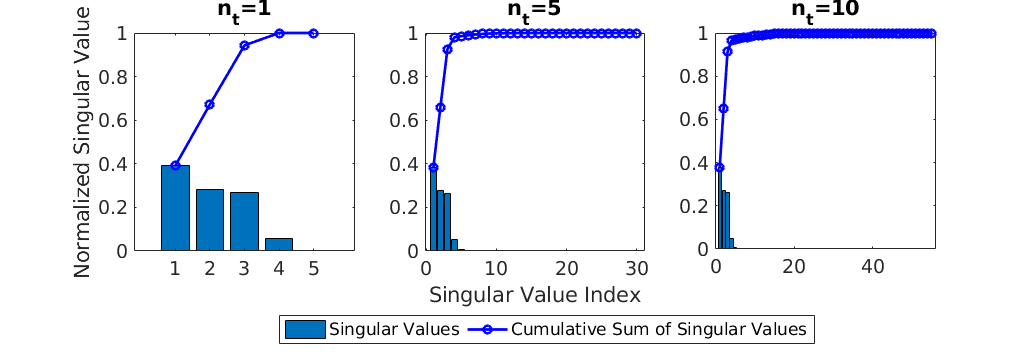}
   \caption{Singular value distribution of fully observed measurement matrix formulated as in \eqref{eqn:dataMatrix} for 1, 5, and 10 time steps.}
   \label{fig:SingularValues}
\end{figure}
As their entries are all related, such a matrix has low dimensionality.  Fig.~\ref{fig:SingularValues} demonstrates the approximate low rank property of matrix $\Scale[0.9]{M}$ for the IEEE 123-bus feeder using 1, 5, and 10 time-step data, respectively. With more time steps, the rank increases slightly, however, due to the increased amount of rows, the low dimensionality of the matrix can be more easily exploited. Additionally, the approximate low-rank property of time-series data containing voltage and current phasors from PMUs has been shown in \cite{pmuPaperEvaluationofAlgs}. 

Though we limit our analysis to just these measurements, we remark that this model is able to accommodate any measurements known to have some correlation with other measurements in the matrix.


\subsection{Linear Load-Flow Model} \label{linearLoadFlow}

{Classical matrix completion methods may output the reconstructed matrix that
differs significantly from the true underlying matrix even if the data satisfies the
low-rank assumption. This is because  the information contained in the observed data is insufficient. To address the problem, we include the power flow constraints in the formulation as in \cite{Schmitt}.}

{In order to avoid the nonlinearity of the voltage-load model,}
we leverage a linear power flow model of the form

\begin{equation}
\Scale[0.9]{
\label{eqn:linvol}
   \begin{aligned}
{v}&\approx w+Nh,\\
|{v}|&\approx|w|+K{h},
\end{aligned}}
\end{equation}
 where $\Scale[0.85]{h=[\Re(s)^{\intercal} \quad \Im(s)^{\intercal}]^{\intercal}\in\mathbb{R}^{2|\mathcal{P}|}.}$
See, for example, \cite{linearLoadFlow,linearLoadFlow2,Christakou2013,Giannakis2016}  on how to compute the model coefficients $\Scale[0.9]{N,K\in\mathbb{C}^{|\mathcal{P}|\times 2|\mathcal{P}|}}$, and $\Scale[0.9]{w\in\mathbb{C}^{|\mathcal{P}|}}$.
 
 Letting $\Scale[0.9]{v^t,w^t,h^t,N^t,K^t}$  denote the corresponding $\Scale[0.9]{v, w,  h},$ $\Scale[0.9]{N, K}$ in (\ref{eqn:linvol}) at time slot $t$, we have the linear approximation of voltage phasors and magnitudes at time $t$.
Let $\Scale[0.85]{A^t={[{(N^t)}^{\intercal}\ {(K^t)}^{\intercal}]}^{\intercal}}$. For simplicity of expression in the sequel, we use the following model to express the above linear model at times $\Scale[0.9]{t=1,\ldots, T}$,
 \begin{equation}
\label{eqn:linmodel}
y\approx Ax+b,
\end{equation}
where \[\Scale[0.9]{y = \begin{bmatrix}
    v^1\\
    |v^1|\\
    \vdots\\
    v^{T}\\
    |v^{T}|
\end{bmatrix}, A = \begin{bmatrix}
    A^1 &  &\\
    &\ddots &\\
     & & A^{T}
\end{bmatrix},  x= \begin{bmatrix}
    h^1\\
    \vdots\\
    h^{T}
\end{bmatrix}, \ \text{and} \
 b =\begin{bmatrix}
    w^1\\
    |w^1|\\
    \vdots\\
    w^{T}\\
    |w^{T}|
\end{bmatrix}}.
\]
{The linear model (\ref{eqn:linmodel}) will be incorporated into the matrix completion formulation in the following section.}
\subsection{The Full Model}
By adding the linear constraint (\ref{eqn:linmodel}) as  a regularization term in the objective function of (\ref{relaxedMC}), we obtain the following matrix completion problem:
\begin{equation}
\begin{aligned}
\label{centralized_SE}
&{\text{minimize}}
\quad \Scale[0.87]{||X||_*+\frac{\mu}{2}||{P}_\Omega(X- M)||_F^2+\frac{\nu}{2}||y-(Ax+b)||_2^2},\\
&\text{over} \quad\quad\quad \Scale[0.87]{X\in\mathbb{R}^{m\times n},y\in\mathbb{C}^{\frac{2}{5}mn},
 x\in\mathbb{R}^{\frac{2}{5}mn}},\\
&\text{s.t.} \quad\quad\quad\ \ \Scale[0.87]{y = \left[
    a_1^{\intercal}X \ \, 
    a_2^{\intercal}X \, 
    \ldots \,
    a_{_{2T-1}}^{\intercal}X\ \,
    a_{_{2T}}^{\intercal}X 
\right]^{\intercal}}, \\
&  \quad\quad\quad\ \ \ \ \ \Scale[0.87]{x = \left[
   c_1^{\intercal}X\ \,
     c_2^{\intercal}X \,
    \ldots \,
    c_{_{2T-1}}^{\intercal}X\ \,
    c_{_{2T}}^{\intercal}X
         \right]^{\intercal}},
\end{aligned}
\end{equation}
where  $\Scale[0.9]{\mu,\nu>0}$ are  penalty parameters, and the parameters $\Scale[0.9]{a_{_1},\ldots, a_{_{2T}}, c_{_{1}},\ldots, c_{_{2T}}\in\mathbb{C}^{m}}$ have the following specific forms: For $\Scale[0.85]{1\leq t\leq T}$ and $\Scale[0.9]{i=1,2, a_{_{2(t-1)+1}}=e_{_{5(t-1)+1}}+je_{_{5(t-1)+2}}}$,  $\Scale[0.9]{a_{_{2t}}=e_{_{5(t-1)+3}}}$, $\Scale[0.9]{c_{_{2(t-1)+i}}=e_{_{5(t-1)+3+i}}}$, in which $t$ denoting the $t$-th time step, $i$ denoting the $i$-th $c$, and $\Scale[0.9]{e_{_k} \ (1\leq k\leq m)}$ is the basis vector in $\Scale[0.9]{\mathbb{R}^m}$. For example, for $a_1$, using the first expression, $\Scale[0.9]{t=1,a_{_1}=e_{_1}+je_{_2}}$; for $a_2$, using the second expression, $\Scale[0.9]{t=1,a_{_2}=e_{_3}}$; for $c_1$, using the third expression, $\Scale[0.9]{t=1,i=1, c_{_1}=e_{_4}}$; for $c_2$, using the third expression,  $\Scale[0.9]{t=1, i=2, c_{_2}=e_{_5}}$.

In the next sections, our goal is to develop a decentralized algorithm to solve \eqref{centralized_SE}.

 \section{Decentralized Linear Load-Flow Model}
\label{Section:DecentralziedLinearPowerFlow}
In order to develop a decentralized algorithm to solve \eqref{centralized_SE}, we first must find a decentralized approximation to the linear load-flow model of Section \ref{linearLoadFlow}. {Assume that the distribution system is partitioned into $\Scale[0.9]{n_A}$ areas.} 
{By linear model (\ref{eqn:linvol}), at time $t$, for a phase $i$ in area $\Scale[0.8]{l}$ {($\Scale[0.8]{1\leq l\leq n_A}$)},  we have 
\[\begin{bmatrix}
         v_i^t\\
         \ \\
         |v_i^t|
     \end{bmatrix} \approx \Scale[0.85]{\begin{bmatrix}
        \sum\limits_{j=1}^{2|\mathcal{P}|}N_{ij}^th_j^t\\
        \ \\ 
         \sum\limits_{j=1}^{2|\mathcal{P}|}K_{ij}^th_j^t
     \end{bmatrix}}+\begin{bmatrix}
          w_i^t\\
          \ \\ 
         |w_i^t|
     \end{bmatrix},
\]
in which 
\[\Scale[0.8]{\begin{bmatrix}
        \sum\limits_{j=1}^{2|\mathcal{P}|}N_{ij}^th_j^t\\
        \ \\ 
         \sum\limits_{j=1}^{2|\mathcal{P}|}K_{ij}^th_j^t
     \end{bmatrix}= \begin{bmatrix}
        \sum\limits_{j\in A^l}P_{ij}^t+\sum\limits_{k\in\mathcal{N}(l)}\sum\limits_{j\in A^k}P_{ij}^t+\sum\limits_{k\notin\mathcal{N}(l)}\sum\limits_{j\in A^k}P_{ij}^t\\
        \ \\ 
         \sum\limits_{j\in A^l}Q_{ij}^t+\sum\limits_{k\in\mathcal{N}(l)}\sum\limits_{j\in A^k}Q_{ij}^t+\sum\limits_{k\notin\mathcal{N}(l)}\sum\limits_{j\in A^k}Q_{ij}^t
     \end{bmatrix}}
\]
with $\Scale[0.85]{P_{ij}^t=N_{ij}^th_j^t, Q_{ij}^t=K_{ij}^th_j^t}$, $\Scale[0.85]{{A}^l}$ denoting the set of phases in area $l$, and $\Scale[0.85]{\mathcal{N}(l)}$ denoting the areas adjacent to area $l$.
We note that the terms $\Scale[0.85]{\sum_{k \notin \mathcal{N}(l) }\sum_{j\in  A^k}P_{ij}^t}$ and $\Scale[0.85]{\sum_{k \notin \mathcal{N}(l) }\sum_{j\in  A^k}Q_{ij}^t}$
represent the influence that power injections at non-adjacent areas have 
on the voltage at phase $i$, which, intuitively, should be small compared to influence from power injections in neighboring areas. This is verified for the IEEE 33 and 123 bus test feeders in Fig.~\ref{fig:heatmap} below. Thus, we make the approximation
\begin{equation}
\label{decenlinearflowtimet}
   \Scale[0.8]{ \begin{bmatrix}
         v_i^t\\
         \ \\
         |v_i^t|
     \end{bmatrix} \approx \begin{bmatrix}
        \sum\limits_{j\in A^l}P_{ij}^t+\sum\limits_{k\in\mathcal{N}(l)}\sum\limits_{j\in A^k}P_{ij}^t\\
        \ \\ 
         \sum\limits_{j\in A^l}Q_{ij}^t+\sum\limits_{k\in\mathcal{N}(l)}\sum\limits_{j\in A^k}Q_{ij}^t
     \end{bmatrix} + \begin{bmatrix}
          w_i^t\\
          \ \\ 
         |w_i^t|
     \end{bmatrix}.}
\end{equation}
 By (\ref{decenlinearflowtimet}), for a phase $i$ in areal $l$ ($\Scale[0.9]{1\leq l\leq n_A}$) at $\Scale[0.9]{t=1,\ldots, T}$, we have 
\begin{equation}
\label{decenlinearflowcomplex}
       \Scale[0.8]{ \begin{bmatrix}
         v_i^1\\
         |v_i^1|\\
         \vdots\\
          v_i^{T}\\
         |v_i^{T}|\\
     \end{bmatrix} \approx \begin{bmatrix}
        \sum\limits_{j\in A^l}P_{ij}^1+\sum\limits_{k\in\mathcal{N}(l)}\sum\limits_{j\in A^k}P_{ij}^1\\
         \sum\limits_{j\in A^l}Q_{ij}^1+\sum\limits_{k\in\mathcal{N}(l)}\sum\limits_{j\in A^k}Q_{ij}^1\\
         \vdots\\
        \sum\limits_{j\in A^l}P_{ij}^{T}+\sum\limits_{k\in\mathcal{N}(l)}\sum\limits_{j\in A^k}P_{ij}^{T}\\
         \sum\limits_{j\in A^l}Q_{ij}^{T}+\sum\limits_{k\in\mathcal{N}(l)}\sum\limits_{j\in A^k}Q_{ij}^{T}
     \end{bmatrix} + \begin{bmatrix}
          w_i^1\\
         |w_i^1|\\
         \vdots\\
          w_i^{T}\\
         |w_i^{T}|\\
         \end{bmatrix}.}
\end{equation}
Set \[ \Scale[0.8]{
y_i = \begin{bmatrix}
         v_i^1\\
         |v_i^1|\\
         \vdots\\
          v_i^{T}\\
         |v_i^{T}|\\
     \end{bmatrix}, T_{k,i}=\begin{bmatrix}
    \sum\limits_{j\in A^k} P_{ij}^1\\
     \sum\limits_{j\in A^k} Q_{ij}^1\\
     \vdots\\
         \sum\limits_{j\in A^k} P_{ij}^{T}\\
     \sum\limits_{j\in A^k} Q_{ij}^{T}\\
\end{bmatrix}, \ \text{and}\ b_i =  \begin{bmatrix}
          w_i^1\\
         |w_i^1|\\
         \vdots\\
          w_i^{T}\\
         |w_i^{T}|\\
         \end{bmatrix}.}
\]
The approximate decentralized linear model (\ref{decenlinearflowcomplex})  can be then written as 
\begin{equation}
    \label{decenlinearflow}
    \Scale[0.85]{y_i \approx T_{l,i}+\sum\limits_{k\in\mathcal{N}(l)}T_{k,i}+b_i.}
\end{equation}
With this approximation, every area $k$ must communicate the term 
$\Scale[0.90]{T_{k,i}}$
for all buses $i$ in neighboring areas. 
The process is outlined in Algorithm 
\ref{algorithm:decentralizedpowerflow}.
}

\begin{algorithm}
 \caption{Algorithm to Calculate Voltage Phasors from Decentralized Linear Load Model at Area $\Scale[0.90]{l}$ ($\Scale[0.90]{1\leq l\leq n_{_A}}$)}
 \label{algorithm:decentralizedpowerflow}
 \begin{algorithmic}[1]
 \renewcommand{\algorithmicrequire}{\textbf{Input:}}
 \renewcommand{\algorithmicensure}{\textbf{Output:}}
 \REQUIRE Model matrix $\Scale[0.90]{M}$
 \ENSURE  Estimated $y_i$ for each bus $\Scale[0.90]{i \in A^l}$ 
  \FOR {$\Scale[0.90]{i\in A^l, k\in \mathcal{N}(l)}$}
  \STATE Send $\Scale[0.90]{T_{k,i}}$
  to area $k$.
  \ENDFOR
   
   \STATE Wait to receive $\Scale[0.90]{T_{k,i}}$ for all $\Scale[0.90]{k\in \mathcal{N}(l), i\in A^l}$
   
   \FOR {$i\in A^l$}
  \STATE  Calculate (\ref{decenlinearflow}) for $y_i$
  \ENDFOR
   
 \RETURN $\Scale[0.90]{y_i$, $\forall i\in A^l}$ 
 
 \end{algorithmic} 
 \end{algorithm}
{For notational simplicity, we introduce the following notations. Let $c_{_{1}}, c_{_{2}}, \ldots c_{_{n_l}}$ denote the indices of phases in area $l$.  We define the linear maps $\Scale[0.9]{E_{_{lk}}(X)}$ and $\Scale[0.9]{E_{_{ll}}(X)}$ as 
\[\Scale[0.85]{
E_{_{lk}}(X)=-\begin{bmatrix}
T_{_{k,c_1}}\\
\vdots\\
T_{_{k,c_l}}
\end{bmatrix}\ 
\text{and} \  
E_{_{ll}}(X)= \begin{bmatrix}
    y_{_{c_{_1}}}\\ \vdots \\ y_{_{c_{_{n_l}}}}
\end{bmatrix}
-\begin{bmatrix}
T_{_{l,c_{_1}}}\\
\vdots\\
T_{_{l,c_{_{n_l}}}}
\end{bmatrix}}.
\]
Additionally, we define the vector $\Scale[0.85]{f_l}$ as $\Scale[0.85]{{f_l=[b_{_{c_{1}}}^{\intercal} \ \ldots \ b_{_{c_{{n_l}}}}^{\intercal}]^{\intercal}}.}$ Using theses expressions, the decentralized linear load flow model at area $l$ can be expressed as
\begin{equation}
\label{linearpowerconstraint}
\Scale[0.85]{
    E_{_{ll}}(X)+\sum_{k\in \mathcal{N}(l)} E_{_{lk}}(X) \approx f_{_{l}}.}
\end{equation}
}
\section{Decentralized Matrix Completion}

\subsection{Formulation}
We adapt a decentralized matrix completion algorithm for sparsity regulated matrix completion from \cite{mardani_mateos_giannakis_2013} to our state estimation formulation.  The algorithm takes advantage of the following characterization of the nuclear norm:
{
\begin{equation}
\label{eqn:nucleareqFrobenius}
\Scale[0.9]{
\begin{aligned}
||X||_*:=& \underset{U \in \mathbb{R}^{m \times r},V \in \mathbb{R}^{r \times n}}{\text{minimize}}\ \frac{1}{2} (||U||_F^2+||V||_F^2) \\
& \quad\ \  \text{s.t.}\ \  \quad\quad\quad\ 
 X=UV,
\end{aligned}}
\end{equation}
where  $r$ is an upper bound on the rank of $\Scale[0.90]{X}$.}
Note that for sufficiently small $r$, using this characterization of the nuclear norm also allows us to dramatically reduce the size of the problem.  We now use \eqref{eqn:nucleareqFrobenius} to reformulate (\ref{centralized_SE}) as follows:
\begin{equation}
\begin{aligned}
\label{step1}
&\text{minimize}\ 
\Scale[0.9]{\frac{1}{2}(||U||_F^2+||V||_F^2)+\frac{\mu}{2} ||{P}_\Omega(UV-M)||_F^2}\\
&\quad\quad\quad\quad\ \Scale[0.9]{+\frac{\nu}{2}||f_1(UV)-(Af_2(UV)+b)||_2^2,}\\
&\text{over} \quad\quad\  \Scale[0.9]{U\in\mathbb{R}^{m\times r}, V\in\mathbb{R}^{r\times n},}\\
&\text{s.t.}\quad\quad\quad \Scale[0.83]{f_1(UV) = \left[
    a_1^{\intercal}UV \ \, 
    a_2^{\intercal}UV \, 
    \ldots \,
    a_{_{2T-1}}^{\intercal}UV\ \,
    a_{_{2T}}^{\intercal}UV
\right]^{\intercal}}, \\
&  \quad\quad\quad\ \ \   \Scale[0.83]{f_2(UV) = \left[
   c_1^{\intercal}UV\ \,
     c_2^{\intercal}UV \,
    \ldots \,
    c_{_{2T-1}}^{\intercal}UV\ \,
    c_{_{2T}}^{\intercal}UV
         \right]^{\intercal}}.
\end{aligned}\end{equation}

Solving this formulation with an alternating method is an effective matrix completion technique, combining benefits of both nuclear norm minimization and alternating minimization \cite{Hastie:2015:MCL:2789272.2912106,rennie_srebro_2005}. When solving (\ref{step1}), this allows us to formulate a scalable algorithm for minimizing the nuclear norm of the matrix without solving a semidefinite program. 

While providing algorithms with significantly lower computational complexity, methods based upon low rank factorization of the form \eqref{eqn:nucleareqFrobenius}  have nonconvex objective functions. 
In general, there is no guarantee that stationary points of a non-convex problem will coincide with the global optima.  However, recent works analyzing the optimization landscape of similar functions have shown that all local minima are also global minima \cite{Ge2016,Josz2018ATO,mardani_mateos_giannakis_2013}.   A similar result is shown for our method below (see Theorem \ref{globalminthm}).

With the low rank factorization, we can now separate the objective function into a sum of functions each utilizing only some columns of $\Scale[0.9]{V}$, allowing for decentralized computations.  As in \cite{mardani_mateos_giannakis_2013}, the basis matrix, $\Scale[0.9]{U}$, must be the same in every control area for our decentralized method. For the coefficients matrix, $\Scale[0.9]{V}$, each column refers to data from a specific bus, and so we separate the coefficient matrix into matrices for each area with columns corresponding to the buses in that area.

At each area $\Scale[0.9]{l}$ ($\Scale[0.9]{l=1,\ldots,n_{_A}}$), we define the basis matrix to be $\Scale[0.85]{U_{_l}}$ satisfying $\Scale[0.85]{U_{_l}=U}$, and the coefficient matrix to be $\Scale[0.85]{V_{l}\in \mathbb{R}^{r \times n_l}}$ satisfying that $\Scale[0.85]{V=[V_{1}\ \cdots \ V_{{n_A}}]}$, where $n_{_l}$ is the number of phases at area $l$. %
We now reformulate the  {objective function in \eqref{step1} to be decentralized.
With these new decentralized variables, we have 
$\Scale[0.85]{||U||_F^2=\frac{1}{n_{_A}}\sum_{l=1}^{n_{_A}} ||U_l||_F^2}$ and $\Scale[0.85]{||V||_F^2=\sum_{l=1}^{n_{_A}} ||V_l||_F^2},$ so the first term in  the objective function of \eqref{step1}} can be written as $\Scale[0.85]{\sum_{l=1}^{n_{_A}} \frac{1}{2} \left(\frac{1}{n_A}||U_l||_F^2+||V_l||_F^2\right).}$
{Define $\Scale[0.85]{M= [M_1\ \cdots \  M_{n_{_A}}]}$ with $\Scale[0.85]{M_l\in\mathbb{R}^{m\times n_l}}$ and 
$\Scale[0.85]{\Omega_l}$ as the set including the known elements in $\Scale[0.85]{M_l}$.
Then  the second term in the objective function of (\ref{step1}) becomes $\Scale[0.85]{\frac{\mu}{2}\sum_{l=1}^{n_A} ||{P}_{\Omega_l}(U_lV_l-M_l) ||_F^2}$. Using (\ref{linearpowerconstraint}), the linear load flow term (the last term) in the objective function of (\ref{step1}) can be written as $\Scale[0.85]{\frac{\nu}{2}\sum_{l=1}^{n_A}||E_{ll}(UV)+\sum_{j\in \mathcal{N}(l)}E_{lj}(UV)-f_l||_2^2}$. Therefore, (\ref{step1}) can be written in a decentralized fashion as:}
\begin{equation}
\label{step2}
\begin{aligned}
 &\Scale[0.85]{\underset{\{U_l\}, \{V_l\}}{\text{minimize}}\ 
\sum_{l=1}^{n_A} \frac{1}{2} \left(\frac{1}{n_A}||U_l||_F^2+||V_l||_F^2\right)} \Scale[0.85]{+\sum_{l=1}^{n_A}\frac{\mu}{2}||{P}_{\Omega_l}(U_lV_l-M_l) ||_F^2}\\
&\quad\quad\quad\ \  \Scale[0.85]{+\sum_{l=1}^{n_A}\frac{\nu}{2}||E_{ll}(UV)+\sum_{j\in \mathcal{N}(l)}E_{lj}(UV)-f_l||_2^2},\\
&\ \text{s.t.}\
 \ \  \ \ \ \Scale[0.85]{U_l=U_j , \;  \forall  j \in \mathcal{N}(l).}
\end{aligned}
\end{equation}

When developing an algorithm to solve problem (\ref{step2}),  we desire not to utilize variables from other areas at a given area, which prompts us to  introduce an auxiliary variable for the equality constraint, as done in  \cite{DistributedSDP,mardani_mateos_giannakis_2013}.  We introduce $\Scale[0.85]{S_{lj}=U_l=U_j}$ to ensure they have the same basis if two areas are neighbours and $\Scale[0.85]{q_{lj}=E_{lj}(UV)}$ to denote the term being communicated in the decentralized linear model. 
With the new variables introduced, (\ref{step2}) becomes
\begin{equation}
\label{eqn:fullDecentralizedModel}
\begin{aligned}
 &\Scale[0.85]{\underset{\{U_l\}, \{V_l\}}{\text{minimize}}\ 
\sum_{l=1}^{n_A} \frac{1}{2} \left(\frac{1}{n_A}||U_l||_F^2+||V_l||_F^2\right)+\sum_{l=1}^{n_A}\frac{\mu}{2}||{P}_{\Omega_l}(U_lV_l-M_l) ||_F^2}\\
&\quad\quad\quad\quad \Scale[0.85]{+\sum_{l=1}^{n_A}\frac{\nu}{2}||E_{ll}(UV)+\sum_{j\in \mathcal{N}(l)}q_{lj}-f_l||_2^2},\\
&\ \text{s.t.}\
 \ \  \ \ \ \Scale[0.85]{U_l=S_{lj}=U_j , \;  \forall  j \in \mathcal{N}(l)},\\
 &\quad\quad\ \ \ \   \Scale[0.85]{E_{lj}(UV)=q_{lj},\forall  j \in \mathcal{N}(l)}.
\end{aligned}
\end{equation}

We present a theorem adapted from \cite{mardani_mateos_giannakis_2013} to show that any stationary point of (\ref{eqn:fullDecentralizedModel}), can be used to construct a global minimum of (\ref{step1}).  
{Define the linear map $\Scale[0.85]{\mathcal{B}: \mathbb{R}^{m \times n} \rightarrow \mathbb{R}^L (L= |\Omega|+n_{_A})}$  and vector $\Scale[0.85]{d \in \mathbb{R}^L}$ as follows
\begin{equation}
\label{linearexpression}
\Scale[0.72]{\mathcal{B}(X)=\begin{bmatrix}
X_{i_1, j_1}\\
\vdots\\
X_{i_{_{|\Omega|}}, j_{_{|\Omega|}}}\\
\sqrt{\frac{\nu}{\mu}} \big(E_{_{11}}(X)+\sum\limits_{j\in \mathcal{N}(1)} E_{_{1j}}(X) \big)\\
\vdots \\
\sqrt{\frac{\nu}{\mu}} \big(E_{n_{_A}n_{_A}}(X)+\sum\limits_{j\in \mathcal{N}(n_{_A})} E_{_{n_{_A}j}}(X) \big)\\
\end{bmatrix}}\ \text{and} \ 
\Scale[0.72]{d=\begin{bmatrix}
M_{i_{_1}, j_{_1}}\\
\vdots\\
M_{i_{_{|\Omega|}}, j_{_{|\Omega|}}}\\
\sqrt{\frac{\nu}{\mu}}f_1\\
\vdots\\
\sqrt{\frac{\nu}{\mu}}f_{n_{_A}}
\end{bmatrix}},
\end{equation}
where $\Scale[0.85]{(i_{_1},j_{_1}),\ldots, (i_{_{|\Omega|}},i_{_{|\Omega|}})}$ denote the entries in $\Scale[0.85]{\Omega}$. Then the last two terms in the objective function of (\ref{eqn:fullDecentralizedModel}) equal $\Scale[0.85]{\frac{\mu}{2}||\mathcal{B}(X) -d||_2^2}$.

{Define $\Scale[0.85]{\langle B,X\rangle _F = \text{trace}(B^{\intercal}X)}$ for $\Scale[0.85]{B,X\in\mathbb{R}^{m\times n}}$.} 
Before stating the theorem, we define the adjoint of $\Scale[0.85]{\mathcal{B}}$, denoted {by} $\Scale[0.85]{\mathcal{B}^*}$, {as follows:}
if $\Scale[0.85]{[\mathcal{B}(X)]_i = \langle B_i, X \rangle_F}$, then $\Scale[0.85]{\mathcal{B}^*: \mathbb{R}^L \rightarrow \mathbb{R}^{m \times n}}$ is defined as  
$\Scale[0.85]{\mathcal{B}^*(z)= \sum_i z_i B_i,}$ where $z$ is a vector in $\Scale[0.85]{\mathbb{R}^L}$. }
{\begin{thm}
\label{globalminthm}
         Let $\Scale[0.85]{\{\overline{U}_l\},\{ \overline{V}_l\}}$ be a stationary point of (\ref{eqn:fullDecentralizedModel}). 
         Then $\Scale[0.85]{\overline{U}=\overline{U}_1=\cdots=\overline{U}_{n_A}}$ and $\Scale[0.85]{\overline{V}=[\overline{V}_1\ \cdots \ \overline{V}_{n_A}]}$ satisfy that 
         the matrix $\Scale[0.85]{\overline{X}=\overline{U}\overline{V}}$ is a global minimum of
\begin{equation} \label{eqn:thm_eqn}
\Scale[0.9]{
\begin{aligned}
& \underset{X}{\text{minimize}}
& & ||X||_*  +\frac{\mu}{2} ||\mathcal{B}(X)-d||_2^2,
\end{aligned}}
\end{equation}if $\Scale[0.9]{||\mu \mathcal{B}^*(\mathcal{B}(\overline{X})-d) |||_2 \leq 1}$ holds.\end{thm}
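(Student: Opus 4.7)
The plan is to reduce the decentralized stationary-point condition to a centralized one and then invoke the classical factorization-to-nuclear-norm certificate argument used in \cite{mardani_mateos_giannakis_2013}. Concretely, I will show that the assembled pair $(\overline{U},\overline{V})$ satisfies the KKT conditions for the centralized nonconvex factorized problem
\[
\min_{U,V}\; \tfrac{1}{2}\bigl(\|U\|_F^2+\|V\|_F^2\bigr) + \tfrac{\mu}{2}\|\mathcal{B}(UV)-d\|_2^2,
\]
and then use the variational identity $\|X\|_* = \min_{X=UV}\tfrac{1}{2}(\|U\|_F^2+\|V\|_F^2)$ together with the hypothesis $\|\mu\,\mathcal{B}^*(\mathcal{B}(\overline{X})-d)\|_2\le 1$ to promote this stationary point to a global minimizer of the convex surrogate \eqref{eqn:thm_eqn}.

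For the first step, I would treat the consensus constraints $U_l=S_{lj}=U_j$ and auxiliary-variable constraints $E_{lj}(UV)=q_{lj}$ as hard equalities, which are satisfied at any feasible stationary point. Assuming the area-adjacency graph is connected, the consensus equalities force $\overline{U}_1=\cdots=\overline{U}_{n_A}=:\overline{U}$, and the second batch forces $q_{lj}=E_{lj}(\overline{U}\overline{V})$ with $\overline{V}=[\overline{V}_1\ \cdots\ \overline{V}_{n_A}]$. Substituting back, together with $\|U\|_F^2=\tfrac{1}{n_A}\sum_l\|U_l\|_F^2$ after consensus and the block definition of $\mathcal{B}$ in \eqref{linearexpression}, collapses the decentralized objective to the centralized one above. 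Summing the stationarity conditions with respect to $U_l$ across all areas causes the consensus multipliers to cancel telescopically, leaving the centralized first-order conditions
\[
\overline{U}+\mu\,\mathcal{B}^*\!\bigl(\mathcal{B}(\overline{X})-d\bigr)\,\overline{V}^\intercal=0,\qquad
\overline{V}+\mu\,\overline{U}^\intercal\,\mathcal{B}^*\!\bigl(\mathcal{B}(\overline{X})-d\bigr)=0,
\]
where $\overline{X}=\overline{U}\overline{V}$.

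For the second step, set $G:=\mu\,\mathcal{B}^*(\mathcal{B}(\overline{X})-d)$; the relations above read $\overline{U}=-G\overline{V}^\intercal$ and $\overline{V}=-\overline{U}^\intercal G$. Substituting each into the other yields $\overline{U}=GG^\intercal\overline{U}$ and $\overline{V}=\overline{V}G^\intercal G$, so the column and row spaces of $\overline{X}$ lie in the top eigenspaces of $GG^\intercal$ and $G^\intercal G$; the bound $\|G\|_2\le 1$ pins these eigenvalues to exactly $1$, i.e.\ the range of $\overline{X}$ consists of singular vectors of $G$ with singular value $1$. Taking the thin SVD $\overline{X}=P\Sigma Q^\intercal$, a short computation using these identities shows that the projection of $-G$ onto the tangent cone at $\overline{X}$ equals the canonical subgradient $PQ^\intercal$, while the off-tangent component $(I-PP^\intercal)(-G)(I-QQ^\intercal)$ has spectral norm bounded by $\|G\|_2\le 1$. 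Hence $-G\in\partial\|\overline{X}\|_*$. Since $G$ is exactly the gradient of the smooth term $\tfrac{\mu}{2}\|\mathcal{B}(\cdot)-d\|_2^2$ at $\overline{X}$, this gives $0\in\partial\bigl(\|\cdot\|_*+\tfrac{\mu}{2}\|\mathcal{B}(\cdot)-d\|_2^2\bigr)(\overline{X})$, and convexity of the surrogate delivers global optimality in \eqref{eqn:thm_eqn}.

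The main obstacle is the subgradient-certificate construction in the second step: the low-rank factorization has rotational ambiguity, so $\overline{U}$ and $\overline{V}$ are not uniquely determined by $\overline{X}$, and we need to argue that the canonical on-tangent part of $-G$ reduces to $PQ^\intercal$ independently of the particular factorization chosen. The fixed-point equations $\overline{U}=GG^\intercal\overline{U}$ and $\overline{V}=\overline{V}G^\intercal G$, together with tightness in $\|G\|_2\le 1$, are precisely the structural facts needed to carry this out. A secondary technical point is stating explicitly the connectivity assumption on the area adjacency graph so that the consensus identification $\overline{U}_1=\cdots=\overline{U}_{n_A}$ really follows; this is standard but deserves to be flagged.
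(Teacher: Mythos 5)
Your proposal is correct, and its first half coincides with the paper's: both reduce the decentralized stationary point to a stationary point of the centralized factorized problem $\min_{U,V}\tfrac12(\|U\|_F^2+\|V\|_F^2)+\tfrac{\mu}{2}\|\mathcal{B}(UV)-d\|_2^2$ (the paper is terser here; your explicit remark that connectivity of the area-adjacency graph is needed to force $\overline{U}_1=\cdots=\overline{U}_{n_A}$ is a worthwhile addition). Where you genuinely diverge is the optimality certificate. The paper lifts \eqref{eqn:thm_eqn} to the standard SDP reformulation with $W=\big[W_1, X; X^{\intercal}, W_2\big]\succeq 0$, plugs in the primal candidate $\overline{W}_1=\overline{U}\,\overline{U}^{\intercal}$, $\overline{W}_2=\overline{V}^{\intercal}\overline{V}$, $\overline{X}=\overline{U}\,\overline{V}$ and the dual candidate $\overline{M}_1=\tfrac12 I$, $\overline{M}_4=\tfrac12 I$, $\overline{M}_2=\overline{M}_3^{\intercal}=\tfrac{\mu}{2}\mathcal{B}^*(\mathcal{B}(\overline{X})-d)$, and verifies the KKT conditions, with complementary slackness supplied by the trace identities of its Lemma~1 and dual feasibility by a Schur-complement argument that consumes $\|G\|_2\le 1$. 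You instead stay in the original matrix space and build a nuclear-norm subgradient certificate: from $\overline{U}=GG^{\intercal}\overline{U}$ and $\overline{V}=\overline{V}G^{\intercal}G$ together with $\|G\|_2\le 1$, the column and row spaces of $\overline{X}$ sit inside the singular-value-one subspaces of $G$; writing $\overline{V}^{\intercal}\overline{V}=\sum_j\lambda_j q_jq_j^{\intercal}$ and $p_j=-Gq_j$ yields an SVD $\overline{X}=\sum_j\lambda_j p_jq_j^{\intercal}$ with $P^{\intercal}(-G)Q=I$, $P^{\intercal}G(I-QQ^{\intercal})=0$ and $(I-PP^{\intercal})GQ=0$, so $-G\in\partial\|\overline{X}\|_*$ and the rotational-ambiguity issue you flag resolves itself; the degenerate case $\overline{X}=0$ is also covered since then $\partial\|\overline{X}\|_*$ is the spectral-norm unit ball. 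Both routes spend the hypothesis $\|\mu\mathcal{B}^*(\mathcal{B}(\overline{X})-d)\|_2\le 1$ in the same place (bounding the off-support block); yours avoids the SDP lift and its dual variables at the cost of invoking the explicit characterization of $\partial\|\cdot\|_*$, while the paper's certificate is mechanically checkable and tracks \cite{mardani_mateos_giannakis_2013} more closely.
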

The proof of Theorem \ref{globalminthm} is shown in Appendix A.} Note that we can choose $\mu$ small enough to ensure the condition of Theorem~\ref{globalminthm} is met. In the next section, we propose an algorithm to seek stationary points of \eqref{eqn:fullDecentralizedModel}. 


\subsection{A Proximal ADMM Algorithm}
\label{section:ADMM}
Alternating Direction Method of Multipliers (ADMM) has been shown to be a useful technique in decentralized algorithms \cite{DistributedSDP,distOptPowFlow}.  While lacking convergence guarantees in the general nonconvex setting, ADMM has been applied to bi-convex problems such as Nonnegative Matrix Factorization with computational success \cite{nmf}. Moreover, by adding  a proximal term in the traditional ADMM approach, the \emph{proximal ADMM} method, has been shown to converge for some nonconvex problems \cite{proxADMM}. We next develop the proximal ADMM method to solve \eqref{eqn:fullDecentralizedModel}.

To this end, define the \emph{scaled} form of the augmented  Lagrangian of (\ref{eqn:fullDecentralizedModel}) as follows:
\begin{align}
\label{eqn:Lagrangian}
&\Scale[0.9]{\mathcal{L}(\{U_{_l}\},\{V_{_l}\},\{S_{_{lj}}\}, \{q_{_{lj}}\},\{\Gamma_{_{lj}}\}, \{\Lambda_{_{lj}}\})}\nonumber\\
&\Scale[0.9]{:= \sum_{l=1}^{n_{_A}} \frac{1}{2} \left(\frac{1}{n_{_A}}||U_l||_F^2+||V_l||_F^2+\mu||{P}_{\Omega_l}(U_{_l}V_{_l}-M_{_l}) ||_F^2\right)}\nonumber\\
&\ \  \ \Scale[0.9]{ +\sum_{l=1}^{n_{_A}}\frac{\nu}{2}||E_{_{ll}}(UV)+\sum_{j\in \mathcal{N}(l)}q_{_{lj}}-f_{_l}||^2}\nonumber\\
&\ \ \  \Scale[0.9]{+ \sum_{l=1}^{n_{_A}}\sum_{j \in \mathcal{N}(l)} \frac{\gamma}{2}||U_{_l}-S_{_{lj}}+\Gamma_{_{lj}}||_F^2}\nonumber\\
&\ \ \ \Scale[0.9]{+\sum_{j=1}^{n_{_A}}\sum_{l \in \mathcal{N}(j)}\frac{\lambda}{2}||q_{_{lj}}-E_{_{lj}}(UV)+\Lambda_{_{lj}}||_F^2,}
\end{align}
where $\Scale[0.85]{\Gamma, \Lambda}$ are dual variables, and $\Scale[0.85]{\mu, \nu, \gamma, \lambda}$ are fixed constants.
Note that we are utilizing the scaled form of the augmented Lagrangian as presented by  \cite{Boyd_admm}. 

The proximal ADMM method then proceeds iteratively by minimizing over each primal variable individually, and then maximizing over the dual variables.
In particular, at each area $l$, the minimization of the {proximal} Lagrangian over $\Scale[0.85]{U_l}$ and $\Scale[0.85]{V_l}$  is performed at each iteration $k$ according to:
\begin{equation}
    \label{eqn:UV_Update}
\begin{split}
    \Scale[0.9]{U_l^{(k+1)}= \underset{U_l}{\text{argmin }}\mathcal{L}_{U}^{(k)}(U_l) + \frac{c}{2}||U_l-U_l^{(k)}||_F^2,} \\
\Scale[0.9]{V_l^{(k+1)}= \underset{V_l}{\text{argmin }}\mathcal{L}_{V}^{(k)}(V_l) + \frac{c}{2}||V_l-V_l^{(k)}||_F^2.}
\end{split}
\end{equation}
where the last term in each update is the proximal term, $c > 0$ is a parameter, and
 we have used the following shorthand notation:
\begin{align*}
    \Scale[0.85]{\mathcal{L}_{U}^{(k)}(U_l):=
     \mathcal{L}(U_l,\{U_j^{(k)}\}_{j\neq l},\{V_{_l}^{(k)}\},\{S_{_{lj}}^{(k)}\},
\{q_{_{lj}}^{(k)}\},\{\Gamma_{_{lj}}^{(k)}\}, \{\Lambda_{_{lj}}^{(k)}\})}
\end{align*}
and
\begin{align*}
    \Scale[0.85]{\mathcal{L}_{V}^{(k)}(V_{_l}):=\mathcal{L}(\{U_{_l}^{(k+1)}\},V_{_l},\{V_{_j}^{(k)}\}_{j\neq l},\{S_{_{lj}}^{(k)}\},
\{q_{_{lj}}^{(k)}\},\{\Gamma_{_{lj}}^{(k)}\}. \{\Lambda_{_{lj}}^{(k)}\}).}
\end{align*}


To update the auxiliary variables and dual variables, each area communicates the basis and the coefficients corresponding to shared entries with all neighboring areas.  A closed form solution for the the auxiliary variables can be obtained through the first order necessary condition. The auxiliary variable $\Scale[0.85]{S_{lj}}$ for $\Scale[0.85]{j\in \mathcal{N}(l)}$  is updated as follows
\begin{equation}
\label{eqn:auxupdate}
 \Scale[0.9]{S^{(k+1)}_{_{lj}}=\frac{1}{2}(U^{(k+1)}_{_l}+U^{(k+1)}_{_j}).}
\end{equation}
At each area $l$, the auxiliary variables $q_{_{lj}}$ for $\Scale[0.85]{j\in \mathcal{N}(l)}$ are updated simultaneously by solving the following system of linear equations:
\begin{equation}
\begin{aligned}
&\Scale[0.9]{q_{_{lj}}^{(k+1)}=\frac{1}{\lambda+\nu}\left[\lambda\left(E_{_{lj}}\left(U^{(k+1)}{V^{(k+1)}}\right)-\Lambda_{_{lj}}^{(k)}\right)\right.}+\\
&\quad\quad\Scale[0.9]{\left.\nu\left(f_l-E_{_{ll}}\left(U^{(k+1)}V^{(k+1)}\right)-\sum_{i\in \mathcal{N}(l), i \neq j} q^{(k+1)}_{li}\right)\right].} 
\end{aligned}
\label{eqn:q_update}
\end{equation}
Finally, the dual variables are updated as 
\begin{equation}
\label{eqn:dualupdate}
    \begin{split}
         \Scale[0.9]{\Gamma^{(k+1)}_{_{lj}}=\Gamma^{(k)}_{_{lj}}+ U^{(k+1)}_l -S^{(k+1)}_{_{lj}},}\\
    \Scale[0.9]{\Lambda^{(k+1)}_{_{lj}}=\Lambda^{(k)}_{_{lj}}+\big(q^{(k+1)}_{_{lj}}-E_{_{lj}}\big(U^{(k+1)}{V^{(k+1)}}\big)\big).}
    \end{split}
\end{equation}
The algorithm is summarized in Algorithm \ref{algo:main}.

 \begin{algorithm} 
 \caption{Decentralized Proximal ADMM Algorithm for State Estimation at Area $l$}
 \begin{algorithmic}[1] \label{algo:main}
 \renewcommand{\algorithmicrequire}{\textbf{Input:}}
 \renewcommand{\algorithmicensure}{\textbf{Output:}}
 \REQUIRE $\Scale[0.85]{A_l, b_l, E_{ll}, E_{jl}}$ for any $\Scale[0.85]{j \in \mathcal{N}(l), f_l}$ 
 \ENSURE  Completed matrix for area $\Scale[0.85]{l, X_l}$
 \\ \textit{Initialization} : $\Scale[0.85]{\Gamma_{lj}=0, \Lambda_{lj}=0}$ for any $\Scale[0.85]{j \in \mathcal{N}(l)}$
 
  \FOR {$k= 1,\ldots, N$} 
  \STATE Solve (\ref{eqn:UV_Update}) for $\Scale[0.85]{U_l^{(k)}, V_l^{(k)}}$ 
  \STATE \textbf{Send:} $\Scale[0.85]{U_l^{(k)}, E_{jl}(U^{(k)}V^{(k)})}$ to all neighbors $\Scale[0.85]{j \in \mathcal{N}(l)}$
  \STATE \textbf{Receive:} $\Scale[0.85]{U_i^{(k)}, E_{lj}(U^{(k)}V^{(k)})}$ from all neighbors $\Scale[0.85]{j \in \mathcal{N}(l)}$
  \STATE Use \eqref{eqn:q_update} to find $\Scale[0.85]{q^{(k)}_{lj}}$ for $\Scale[0.85]{j\in \mathcal{N}(l)}$ 
  \FOR{$\Scale[0.85]{\forall j \in \mathcal{N}(l)}$}
  \STATE Update $\Scale[0.85]{S^{(k)}_{lj},\Gamma^{(k)}_{lj}}$, and $\Scale[0.9]{\Lambda^{(k)}_{lj}}$ using (\ref{eqn:auxupdate}) and (\ref{eqn:dualupdate})\\
  \ENDFOR
\STATE \textbf{Send:} $q_{lj}^{(k)}$ to all neighbors $\Scale[0.85]{j \in \mathcal{N}(l)}$
  \STATE \textbf{Receive:} $q_{jl}^{(k)}$ from all neighbors $\Scale[0.85]{l \in \mathcal{N}(j)}$
  \ENDFOR
 \RETURN $\Scale[0.85]{X_l=U_l^{(N)}V_l^{(N)}}$ 
 \end{algorithmic} 
 \label{Agorthim:decenralizedAltMin}
 \end{algorithm}

Note that the information that must be communicated is relatively small.  If each bus has $m$ measurements associated with it, then neighboring areas $l$ and $j$ must communicate $n_l+n_j+mr$ measurements, where $n_i$ is the number of phases in area $i$.  This quantity is much smaller compared to the total $(n_l+n_j)m$ measurements at both areas.

Finally, it is worth pointing out that while the proof of convergence of Algorithm \ref{algo:main} remains an open research question, in our numerical experiments, the algorithm converged to stationary points of \eqref{eqn:fullDecentralizedModel}, which, by the virtue of Theorem \ref{globalminthm}, are global minima of the original problem \eqref{centralized_SE}.


\section{Numerical Results}
\label{section:numericalResults}
\label{section:decentralized}
\begin{figure}
   \centering
   \includegraphics[scale=0.295,trim= 30 120 25 50, clip]{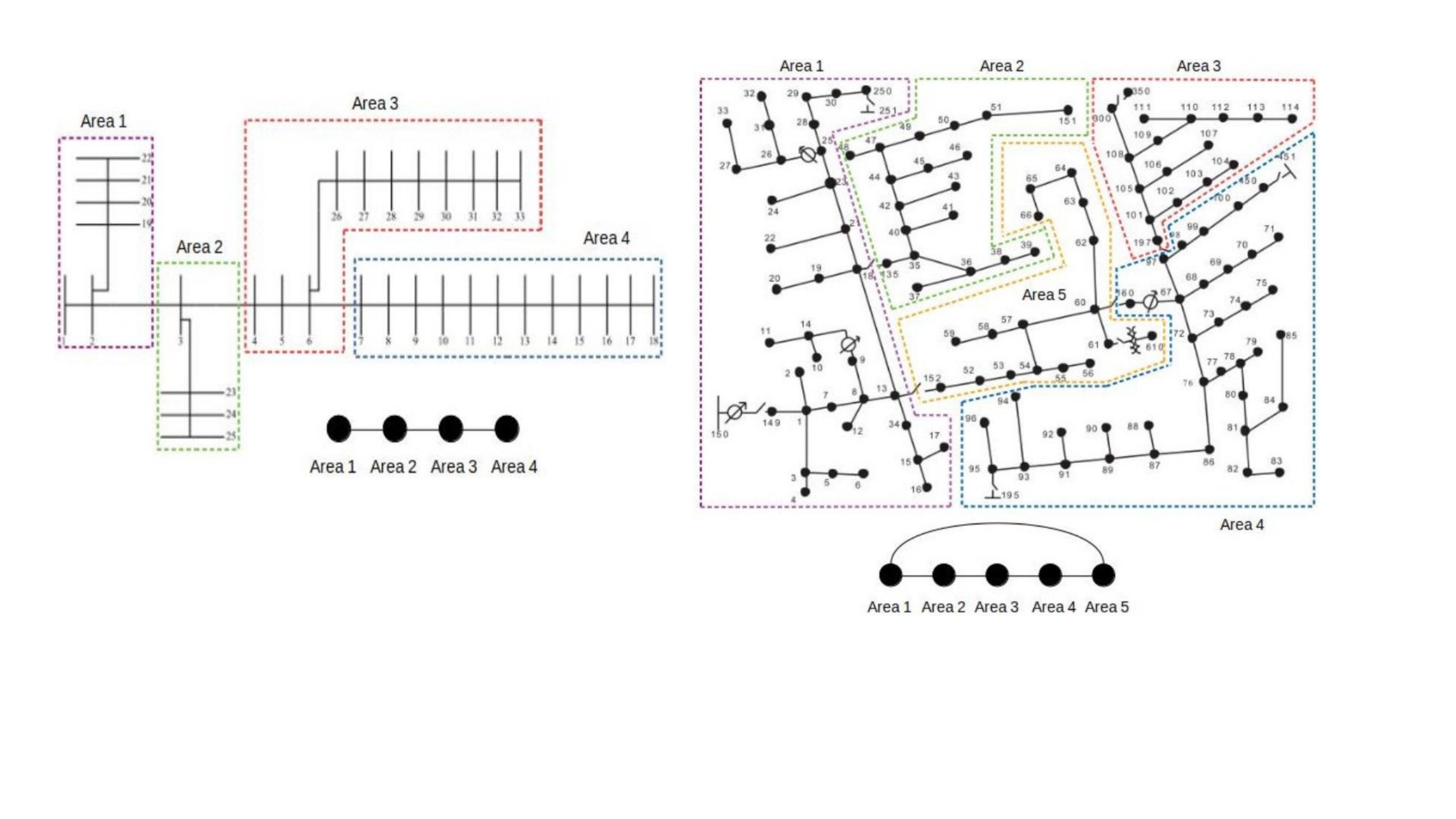}
   \caption{(a) Decentralized IEEE 33 bus system (b) Decentralized IEEE 123 bus system}
   \label{fig:ieeesystems}
\end{figure}
We demonstrate the performance of the decentralized linear load flow model on the IEEE 33 and 123 bus test feeders, and evaluate the decentralized proximal ADMM algorithm on the IEEE 123 bus test feeder.  The IEEE 33 bus test feeder is a balanced network modified to have solar panels
added at buses 16, 23, and 31, and the IEEE 123 bus test feeder is a multi-phase unbalanced radial distribution system, in which buses are single-, double-, or three-phase (with 263 phases in total).  We seperate the IEEE 33 bus test feeder into four areas as shown in Fig.~\ref{fig:ieeesystems}(a), and the IEEE 123 bus test feeder into five areas as shown in Fig.~\ref{fig:ieeesystems}(b). 
For results using three paritions of the IEEE 33 bus test feeder, we combine areas two and three in Fig.~\ref{fig:ieeesystems}(a) into one area.  For results that use two, three, or four partitions of the IEEE 123 bus test feeder, we combine the areas in Fig.~\ref{fig:ieeesystems}(b) as follows: For the two area partition, we combine areas two, three, four, and five into one area.  For the three area partition, we combine areas two and three into one area and areas four and five into one area.  For the four area partition, we combine areas one and two.  
\subsection{Decentralized Linear Model}
\begin{figure}
   \centering
   \includegraphics[scale=0.45,trim= 40 240 45 105, clip]{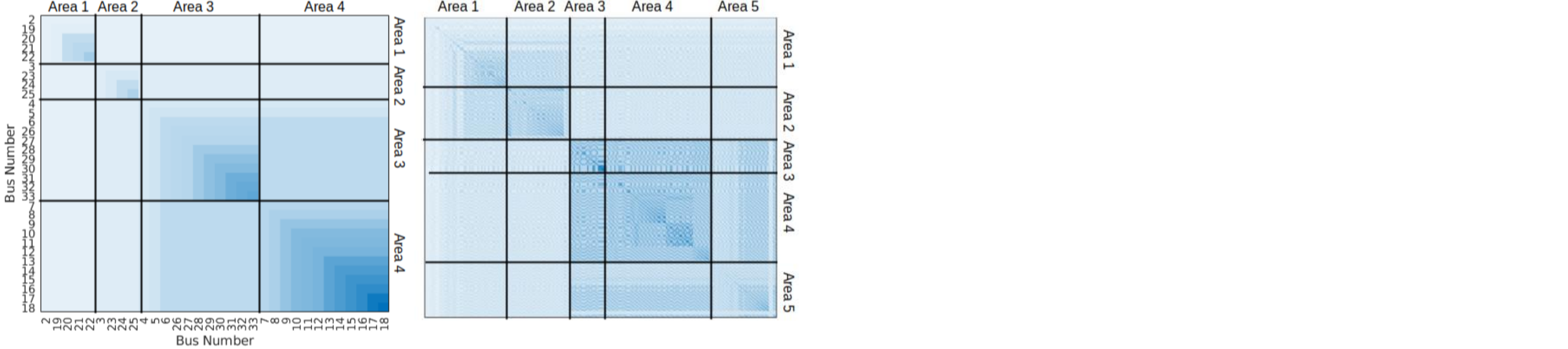}
   \caption{(a) IEEE 33 bus system (b) IEEE 123 bus system \ Visual representation of matrix $\Scale[0.9]{N}$ for the IEEE 33 and 123 bus systems.  The matrices are rearranged and sorted by areas to demonstrate the approximate block structure.}
   \label{fig:heatmap}
\end{figure}
We start by numerically evaluating the decentralized approximation to the linear load flow model.  In our simulation setting, $N^1=\cdots=N^{T} =N$ in (\ref{eqn:linvol}).
 The first $\Scale[0.85]{|\mathcal{P}|}$ columns of  matrix $\Scale[0.85]{N}$ from equation \eqref{eqn:linvol} are shown in Fig.~\ref{fig:heatmap}. 
We see that this matrix has a block structure allowing us to make the truncation necessary for the decentralized method.  If the areas are chosen such that buses $i$ and $j$ where $\Scale[0.85]{N_{ij}}$ is large, that is, power input at bus $j$ is highly correlated with voltage at bus $i$, are in the same area or neighboring areas.

Analytically, we propose a metric to predict the accuracy of the decentralized model for a given partitioning of buses into areas. 
The matrix $\Scale[0.85]{N}$ in equation (\ref{eqn:linvol}) is approximated by the matrix $\Scale[0.85]{\tilde{N}}$ where $\Scale[0.85]{\tilde{N}_{ij}=N_{ij}}$ if the buses corresponding to rows $i$ and $j$ are in the same area or neighboring areas, and  $\Scale[0.85]{\tilde{N}_{ij}=0}$ otherwise.  With this, the decentralized model is equivalent to using the original model with this new matrix.  Thus, we use the relative Frobenius norm difference $\Scale[0.85]{||N-\tilde{N}||_F/||N||_F}$ to measure the quality of the approximation. For the four area 33 bus system shown in Fig.~\ref{fig:ieeesystems}(a), the relative error is 0.0411, and for the five area 123 bus system, the relative error is 0.2604.

We evaluate the effectiveness of the decentralized linear load flow model compared to the original linear load flow approximation in Fig.~\ref{fig:decentralizedLinLoadFlow}.  As shown, the decentralized method performs similarly to the centralized model.
\begin{figure}
    \centering
  \includegraphics[scale=0.4,trim= 20 15 30 40, clip]{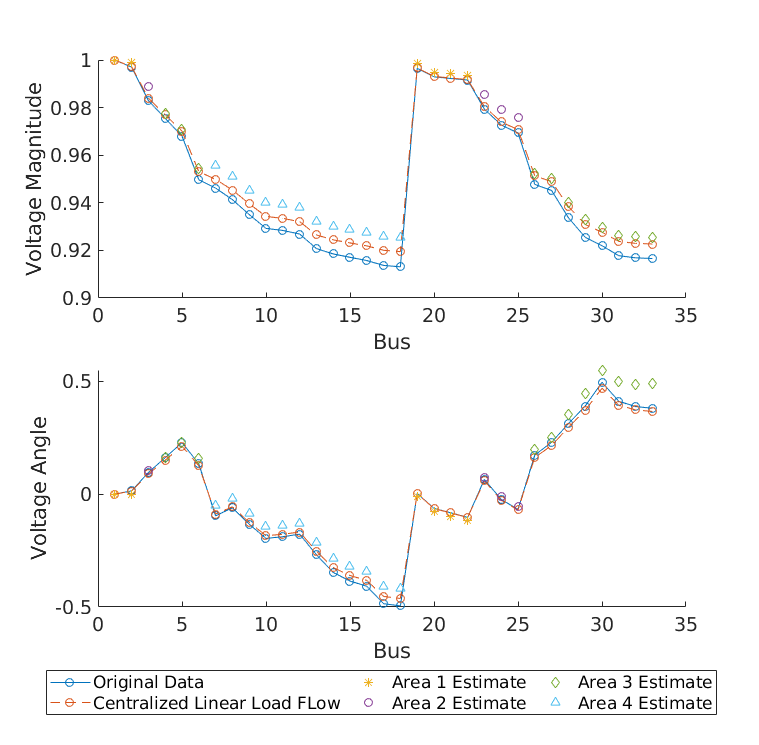}
    \caption{Voltage magnitude and angle reconstructed from the centralized and decentralized linear load flow models.}
    \label{fig:decentralizedLinLoadFlow}
\end{figure}

We use the mean absolute percentage error (MAPE) of voltage magnitude and the mean absolute error (MAE) of voltage angle to measure the accuracy of the linear model for the IEEE 33 and 123 bus feeders under different partitions. The results on the IEEE 33 bus system with one, three, and four area partitions and those on the IEEE 123 bus system with one, three, four, and five area partitions are shown in Table \ref{table:linmodelpartitions}.
 Note that the case of two areas in the 33 bus system and two and three areas in the 123 bus system are omitted as there is no information loss compared to the one area case, and thus the results would be identical to the one area case. While the MAPE of  voltage magnitude and MAE of  voltage angle are notably larger when the network is partitioned into more than three areas, the error is on the same order of magnitude in every case.  
The MAPE of voltage magnitude and MAE of voltage angle for the one, two, and three area results are functionally equivalent (within the 95\% confidence interval), which is expected as the loss functions are equivalent in each case. 

\begin{table}
    \caption{MAPE of voltage magnitude and MAE of voltage angle  from the linear power flow model utilizing the decentralized approximation for the IEEE 33  and 123 bus test feeders}
    \label{table:linmodelpartitions}
\begin{tabular}{ll||l|l|l|l|}
\cline{3-6}
                                                &           & 1 Area & 3 Areas & 4 Areas & 5 Areas               \\ \hline \hline
\multicolumn{1}{|l|}{\multirow{2}{*}{IEEE 33}}  & Magnitude & 0.471       & 0.738   & 0.764   & \multicolumn{1}{c|}{-} \\ \cline{2-6} 
\multicolumn{1}{|l|}{}                          & Angle     & 0.0337     & 0.0379 & 0.0401  & \multicolumn{1}{c|}{-} \\ \hline \hline
\multicolumn{1}{|l|}{\multirow{2}{*}{IEEE 123}} & Magnitude & 0.148       &\multicolumn{1}{c|}{-} & 0.408   & 0.590                \\ \cline{2-6} 
\multicolumn{1}{|l|}{}                          & Angle     & 0.131     & \multicolumn{1}{c|}{-} & 0.141 & 0.218               \\ \cline{1-6} 
\end{tabular}
\end{table}
  \subsection{IEEE 123 bus test feeder}
  The effectiveness of the decentralized proximal ADMM method in Algorithm \ref{Agorthim:decenralizedAltMin} is demonstrated on the multiphase IEEE 123 bus test feeder.  
   We consider the real and imaginary parts of voltage phasor as variables, and the voltage magnitude, active power, and reactive power as potentially known measurements.  
  In our experiments,  1\% Gaussian noise is added to the measurements. Unless stated otherwise,  we assume that the voltage phasors at the slack bus are known and
  50\% of the data could be obtained by SCADA measurements (active, reactive power injections and voltage magnitudes).  Additionally, all results are based on the partition with five areas and the data matrix being built with 5 minutes of data (thus of size $25 \times 260$) unless otherwise stated.  

    Observe that when less than $2/3$ of the potential measurements  are available, the estimation problem is underdetermined; we refer to this case as \emph{low observability}.
     Fig.~\ref{fig:Percent_data_known} demonstrates that the algorithm performs well (less than 2\% MAPE) even when  10\% of the data is available.  
    The MAPE of voltage magnitude and the MAE of voltage angle decrease as more data are available, though the difference is quite small after 50\% of the data are available.
     Due to the less accurate linear model for the 5-area case, the MAPE of voltage magnitude for the 5-area  case is slightly larger than  the 1-area case, but still within 1\% once the data availability reaches 20\%.  The MAE of voltage angle is smaller for the 5-area case when the data availability is between 10\% and 40\%, and it is comparable for the two cases when the data availability is over 40\%. 
    \begin{figure}
    \centering
  \includegraphics[scale=0.55,trim= 0 0 20 20, clip]{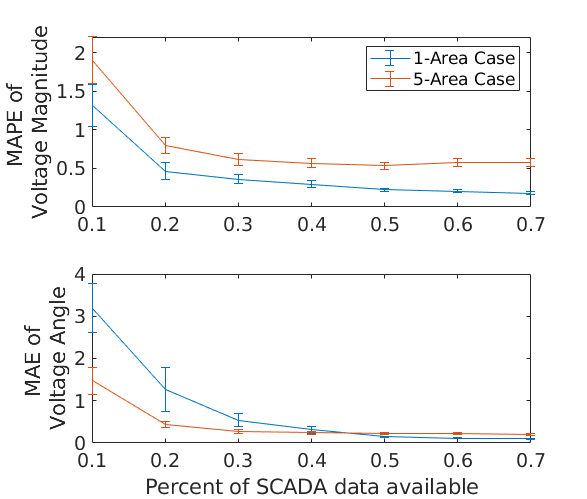}
    \caption{MAPE of voltage magnitude and MAE of voltage angle for 1-area case and 5-area case vs.  percentage of data known 
} 
    \label{fig:Percent_data_known}
\end{figure}
    \begin{table}
\caption{MAPE of voltage magnitude and MAE of voltage angle for  one and five area partitions utilizing 1-, 3-, 5-, and 10-time step with 1\% noise} 
\label{table:Varying_nt}
\center

\begin{tabular}{ll|l|l|l|l|}
\cline{3-6}
                                                  &               & \multicolumn{1}{c|}{$T=1$}                               & \multicolumn{1}{c|}{$T=3$}                               & $T=5$                                                    & \multicolumn{1}{c|}{$T=10$}                              \\ \hline
\multicolumn{1}{|l|}{\multirow{2}{*}{$|v|$}}      & 1-Area   & \begin{tabular}[c]{@{}l@{}}0.328\\ $\pm$0.027\end{tabular} & \begin{tabular}[c]{@{}l@{}}0.213\\ $\pm$0.021\end{tabular} & \begin{tabular}[c]{@{}l@{}}0.182\\ $\pm$0.023\end{tabular} & \begin{tabular}[c]{@{}l@{}}0.149\\ $\pm$0.012\end{tabular} \\ \cline{2-6} 
\multicolumn{1}{|l|}{}                            & 5-Area & \begin{tabular}[c]{@{}l@{}}0.713\\ $\pm$0.046\end{tabular} & \begin{tabular}[c]{@{}l@{}}0.717\\ $\pm$0.033\end{tabular} & \begin{tabular}[c]{@{}l@{}}0.698\\ $\pm$0.041\end{tabular} & \begin{tabular}[c]{@{}l@{}}0.676\\ $\pm$0.032\end{tabular} \\ \hline
\multicolumn{1}{|l|}{\multirow{2}{*}{$\angle v$}} & 1-Area   & \begin{tabular}[c]{@{}l@{}}0.177\\ $\pm$0.016\end{tabular} & \begin{tabular}[c]{@{}l@{}}0.135\\ $\pm$0.02\end{tabular}  & \begin{tabular}[c]{@{}l@{}}0.118\\ $\pm$0.015\end{tabular} & \begin{tabular}[c]{@{}l@{}}0.107\\ $\pm$0.01\end{tabular}  \\ \cline{2-6} 
\multicolumn{1}{|l|}{}                            & 5-Area & \begin{tabular}[c]{@{}l@{}}0.351\\ $\pm$0.028\end{tabular} & \begin{tabular}[c]{@{}l@{}}0.312\\ $\pm$0.016\end{tabular} & \begin{tabular}[c]{@{}l@{}}0.298\\ $\pm$0.026\end{tabular} & \begin{tabular}[c]{@{}l@{}}0.276\\ $\pm$0.022\end{tabular} \\ \hline
\end{tabular}

\end{table}

    Table \ref{table:Varying_nt} demonstrates the advantage of utilizing more time steps in the data matrix.  The simulation was run 20 times with 1\% Gaussian noise and 50\% measurements available, and a 95\% confidence interval is presented.  The trend indicates that utilizing more time steps does result in a more accurate estimation of the voltage phasor, however this trend is more evident in the {1-area case} than in the {5-area case}. 
    We conjecture that this is due to the error in the decentralized linear load flow model being larger, and thus introducing a larger lower bound on the potential performance of the algorithm.
    \begin{table}[]
\center
\caption{MAPE of voltage magnitude and MAE of voltage angles for different partitions on the IEEE 123 bus test feeder}
\label{table:MC_num_areas}
\begin{tabular}{l|l|l|l|l|l|}
\cline{2-6}
                                 & \multicolumn{1}{c|}{1 Area}                                & \multicolumn{1}{c|}{2 Areas}                               & \multicolumn{1}{c|}{3 Areas}                               & \multicolumn{1}{c|}{4 Areas}                               & \multicolumn{1}{c|}{5 Areas}                               \\ \hline
\multicolumn{1}{|l|}{$|v|$}      & \begin{tabular}[c]{@{}l@{}}0.182\\ $\pm$0.023\end{tabular} & \begin{tabular}[c]{@{}l@{}}0.217\\ $\pm$0.023\end{tabular} & \begin{tabular}[c]{@{}l@{}}0.212\\ $\pm$0.022\end{tabular} & \begin{tabular}[c]{@{}l@{}}0.535\\ $\pm$0.036\end{tabular} & \begin{tabular}[c]{@{}l@{}}0.698\\ $\pm$0.041\end{tabular} \\ \hline
\multicolumn{1}{|l|}{$\angle v$} & \begin{tabular}[c]{@{}l@{}}0.118\\ $\pm$0.015\end{tabular} & \begin{tabular}[c]{@{}l@{}}0.126\\ $\pm$0.013\end{tabular} & \begin{tabular}[c]{@{}l@{}}0.111\\ $\pm$0.011\end{tabular} & \begin{tabular}[c]{@{}l@{}}0.23\\ $\pm$0.022\end{tabular}  & \begin{tabular}[c]{@{}l@{}}0.298\\ $\pm$0.026\end{tabular} \\ \hline
\end{tabular}
\end{table}
\begin{figure}
    \centering
  \includegraphics[scale=0.45,trim= 0 0 20 20, clip]{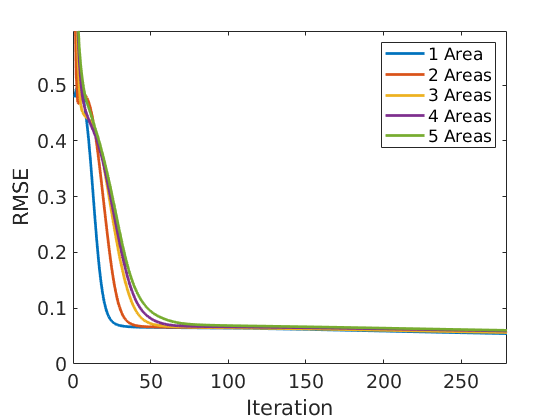}
    \caption{Convergence of Algorithm~\ref{Agorthim:decenralizedAltMin} for different partitions  
    } 
    \label{fig:convergence}
\end{figure}
\begin{figure}
    \centering
  \includegraphics[scale=0.45,trim= 0 0 20 20, clip]{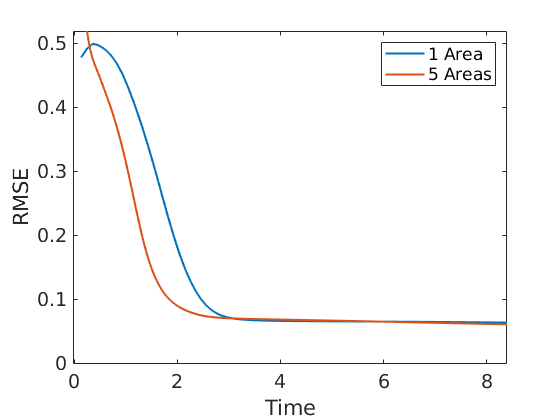}
    \caption{Running time of each iteration for Algorithm~\ref{Agorthim:decenralizedAltMin} with 1-area case (centralized case) and 5-area case}
    \label{fig:time}
\end{figure}

    Next, we analyze the impact of the number of areas on the performance of the algorithm.  While in the decentralized matrix completion algorithm presented in \cite{mardani_mateos_giannakis_2013}, the results  are invariant of the number of areas, our algorithm has additional errors introduced by the linear load flow model when utilizing a larger number of areas.  Table \ref{table:MC_num_areas} demonstrates that while this does contribute to the error, it does so in a similar manner to that observed in Algorithm \ref{algorithm:decentralizedpowerflow} shown in Table \ref{table:linmodelpartitions}.  That is, the error does not propagate in a noticeable way.  The MAPE of voltage magnitude stays below 1\% and the MAE of voltage angle stays below 0.324 degrees for each partition. 
  
Finally, we show the convergence of the algorithm for one through five areas in terms of both iteration number and CPU time.  At each iteration, we record the root mean squared error (RMSE) between the full matrix and the data matrix, and the maximum of  computation time for each area.  We show in Fig.~\ref{fig:convergence} that while the algorithm takes more iterations to converge, the number of iterations is less than 200, and that the rate is not drastically slower when more areas are included.  Additionally, in Fig.~\ref{fig:time}, we show that the overall running time  is more than halved when using five area partitions, as each iteration is considerably cheaper.
\section{Conclusion}
\label{section:conclusions}
We developed a decentralized algorithm for low-observability state estimation problem. The algorithm is based on formulating a regularized matrix completion problem and solving it using the proximal ADMM method. The computation is distributed among different areas in the network, resulting in  less running time compared to the centralized case. 

Although the stationary points of \eqref{eqn:fullDecentralizedModel} are shown to be global minima of the original problem under some conditions (Theorem 1), the proof of convergence of Algorithm \ref{algo:main} to these stationary points remains an interesting research question. A promising direction is to use the recent results from  \cite{Wang2019}
that show convergence of a similar algorithm under some conditions. 
{Additionally, this algorithm can be modified such that voltages can be estimated  in real time \cite{onlineRPCA}}.

\bibliographystyle{unsrt}
\bibliography{references}

\appendix
\subsection{Proof of Theorem \ref{globalminthm}}
We first observe that for every stationary point of (\ref{eqn:fullDecentralizedModel}), the corresponding point $(\overline{U}, \overline{V})$ is a stationary point in the problem 
\begin{equation}
\label{eqn:appendixOneArea}
\Scale[0.85]{
\begin{aligned}
& \underset{U\in\mathbb{R}^{m\times r},V\in\mathbb{R}^{r\times n}}{\text{minimize}}
& & \frac{1}{2} (||U||_F^2+||V||_F^2)+\frac{\mu}{2} ||\mathcal{B}(UV)-d||_2^2,
\end{aligned}
}
\end{equation}
where the linear mapping $\mathcal{B}$ and vector $d$ represent the loss function for each individual area combined into one linear mapping as shown in equation {\eqref{linearexpression}}.  
To see this, we note that 
$$\Scale[0.9]{||\overline{V}||_F^2 =\sum_{i=1}^{n_{_A}} ||\overline{V}_i||_F^2,\  ||\overline{U}||_F^2 =\frac{1}{n_{_A}} \sum_{i=1}^{n_{_A}} ||\overline{U}_i||_F^2},$$
and 
\begin{equation*}
\Scale[0.9]{
\begin{aligned}
\frac{\mu}{2}& ||\mathcal{B}(\overline{U}\overline{V})-d||^2 =\sum_{i=1}^{n_{_A}}   \frac{\mu}{2} ||P_{\Omega_i}(\overline{U}_i\overline{V}_i-M_i)||^2 \\
&\quad \quad\quad+ \sum_{i=1}^{n_{_A}}\frac{\nu}{2} ||{E}_{ii}(\overline{U}_i\overline{V}_i)+\sum_{j \in \mathcal{N}(i)}E_{ij}(\overline{U}_j\overline{V}_j)-f_i||^2.
\end{aligned}
}
\end{equation*} 
\begin{lem}
Let $\Scale[0.9]{(\overline{U}, \overline{V})}$ be a stationary point of (\ref{eqn:appendixOneArea}).  Then,  
\begin{equation}
\begin{split}
&\textup{tr}(\mu \mathcal{B}^*(\mathcal{B}( \overline{U}\overline{V})-d) )^{\intercal} \overline{U}\overline{V})+ \textup{tr}(\overline{V}^{\intercal}\overline{V})=0,\\
&\textup{tr}(\mu \mathcal{B}^*(\mathcal{B}( \overline{U}\overline{V})-d) )^{\intercal} \overline{U}\overline{V})+ \textup{tr}(\overline{U}\overline{U}^{\intercal})=0.
\end{split}
\label{traceequalszero}
\end{equation}
\end{lem}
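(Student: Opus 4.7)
The plan is to write down the first-order optimality (stationarity) conditions for problem \eqref{eqn:appendixOneArea} and then contract them appropriately against $\overline{U}$ and $\overline{V}$ using the Frobenius inner product. Let $F(U,V) = \frac{1}{2}(\|U\|_F^2 + \|V\|_F^2) + \frac{\mu}{2}\|\mathcal{B}(UV) - d\|_2^2$. First I would compute the two partial gradients, using the chain rule together with the defining property of the adjoint $\mathcal{B}^*$ (so that the differential of $\tfrac{1}{2}\|\mathcal{B}(UV)-d\|_2^2$ at $UV$ acts on a perturbation $\Delta$ via $\langle \mathcal{B}^*(\mathcal{B}(UV)-d),\Delta\rangle_F$). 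This gives
\[
\nabla_U F = U + \mu\,\mathcal{B}^*(\mathcal{B}(UV)-d)\,V^{\intercal},\qquad
\nabla_V F = V + \mu\,U^{\intercal}\,\mathcal{B}^*(\mathcal{B}(UV)-d).
\]
Setting both to zero at $(\overline{U},\overline{V})$ yields the two matrix identities that encode stationarity.

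Next I would take the Frobenius inner product of $\nabla_U F(\overline U,\overline V)=0$ with $\overline U$, i.e.\ multiply by $\overline U^{\intercal}$ from the left and apply $\mathrm{tr}(\cdot)$. The first term becomes $\mathrm{tr}(\overline U^{\intercal}\overline U)=\mathrm{tr}(\overline U\,\overline U^{\intercal})$, and for the second term I would invoke cyclicity of the trace,
\[
\mathrm{tr}\!\left(\overline U^{\intercal}\mathcal{B}^*(\mathcal{B}(\overline U\overline V)-d)\,\overline V^{\intercal}\right) = \mathrm{tr}\!\left((\overline U\,\overline V)^{\intercal}\mathcal{B}^*(\mathcal{B}(\overline U\overline V)-d)\right),
\]
which equals $\mathrm{tr}\!\left((\mu\mathcal{B}^*(\mathcal{B}(\overline U\overline V)-d))^{\intercal}\overline U\,\overline V\right)/\mu$ after one further application of the identity $\mathrm{tr}(A^{\intercal}B)=\mathrm{tr}(B^{\intercal}A)$. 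Combining these yields the second line of \eqref{traceequalszero}. The first line is obtained symmetrically by contracting $\nabla_V F(\overline U,\overline V)=0$ with $\overline V$ from the right (multiply by $\overline V^{\intercal}$ and trace), using exactly the same cyclicity manipulations.

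There is no genuinely hard step here: the entire argument is a direct consequence of first-order optimality together with the cyclic property of the trace. The only point requiring modest care is bookkeeping for the adjoint $\mathcal{B}^*$ in the gradient formulas (making sure the left/right placement of $\overline V^{\intercal}$ and $\overline U^{\intercal}$ is correct so that the Frobenius inner product manipulations go through cleanly). Once both stationarity identities have been contracted with the appropriate factor and the traces rearranged, the two displayed equations in \eqref{traceequalszero} fall out immediately.
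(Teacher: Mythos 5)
Your proposal is correct and follows essentially the same route as the paper: write the first-order stationarity conditions $\nabla_U f = \mu\,\mathcal{B}^*(\mathcal{B}(\overline{U}\overline{V})-d)\overline{V}^{\intercal}+\overline{U}=0$ and $\nabla_V f = 0$, then contract each with $\overline{U}$ (resp. $\overline{V}$) under the trace and use cyclicity to produce the two identities. The paper states this in one line ("evaluate $\mathrm{tr}(\nabla_U f\,\overline{U}^{\intercal})$ and $\mathrm{tr}(\nabla_V f\,\overline{V})$"); you simply spell out the same trace manipulations in more detail.
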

\begin{proof}
Let $f(U,V)$ denote the objective function in (\ref{step1}).  Any stationary point {$(\overline{U}, \overline{V})$}  must satisfy the first order necessary conditions: 
\begin{align*}
        \nabla_U f(\overline{U},\overline{V})=\mu \mathcal{B}^*(\mathcal{B}( \overline{U}\overline{V})-d)\overline{V}^{\intercal}+ \overline{U} =0_{m \times r},\\
         \nabla_{{V}} f(\overline{U},\overline{V})=\mu \mathcal{B}^*(\mathcal{B}( \overline{U}\overline{V})-d)^{\intercal}\overline{U} +\overline{V}^{\intercal} =0_{n \times r}.
\end{align*}
The lemma can be shown by evaluating $\text{tr}( \nabla_U f(\overline{U},\overline{V})\overline{U}^{\intercal})$ and $\text{tr}( \nabla_V f(\overline{U},\overline{V}) \overline{V})$. Additionally, we have that $\text{tr}(\overline{U}\overline{U}^{\intercal})=\text{tr}(\overline{V}^{\intercal}\overline{V})$.
\end{proof}
We now prove theorem \ref{globalminthm}.
\begin{proof}
By (\ref{eqn:nucleareqFrobenius}), problem (\ref{eqn:appendixOneArea}) is equivalent to the following problem
{\begin{equation}
\label{eqn:appendixoneAreaequiva}
    \underset{U\in\mathbb{R}^{m\times r},V\in\mathbb{R}^{r\times n}}{\text{minimize}} ||X||_*+\frac{\mu}{2} ||\mathcal{B}(UV)-d||_2^2.
\end{equation}
By \cite{exactMatrixCompletion}, problem (\ref{eqn:appendixoneAreaequiva}) is equivalent to 
 the following convex semidefinite programming problem,
}
\begin{equation}
\label{equivSDP}
\begin{aligned}
& \underset{\substack{X\in\mathbb{R}^{m\times n},\\W_1\in\mathbb{R}^{m\times m},\\
W_2\in\mathbb{R}^{n\times n}}}{\text{minimize}}
& & \frac{1}{2}(\text{trace}(W_1)+\text{trace}(W_2))+\frac{\mu}{2}||\mathcal{B}(X)-d||^2\\
& \ \text{s.t.}
& & W = \big[W_1, X; X^{\intercal}, W_2\big] \succeq 0.
\end{aligned}
\end{equation}
The Lagrangian of (\ref{equivSDP}) is given by 
\begin{align*}
   \mathcal{L}&(W_1, W_2, X, {M})=\frac{1}{2}(\text{tr}(W_1)+\text{tr}(W_2)) +\frac{\mu}{2}||\mathcal{B}( X)-d||^2\\
   & -\text{tr}(M_1^{\intercal}W_1+M_3^{\intercal}X^{\intercal})-\text{tr}(M_2^{\intercal}X+M_4^{\intercal}W_2),
\end{align*}
where the dual variable $M$ is decomposed as 
\[
{M}=\begin{bmatrix}{M}_1 & {M}_2\\ {M}_3 & {M}_4\end{bmatrix}.
\]
We now show that $\overline{X}=\overline{U} \overline{V}$, $\overline{W_1}=\overline{U}\overline{U}^{\intercal}$, $\overline{W_2}=\overline{V}^{\intercal}\overline{V}$ is a global optimal of (\ref{equivSDP}), with dual variables  $\overline{M}_1 = \frac{1}{2}I_{m\times m}$, $\overline{M}_4=\frac{1}{2}I_{n\times n}$, $\overline{M}_2=\overline{M}_3^{\intercal} =\frac{\mu}{2} \mathcal{B}^*(\mathcal{B}({\overline{X}})-d)\in\mathbb{R}^{m\times n}$.  We must show four things:  the gradients of the Lagrangian are zero at this point, complementary slackness of the dual variable, primal feasibility, and dual feasibility.

The gradients are given by 
\begin{equation*}
\begin{aligned}
    \nabla_X\mathcal{L} &=\mu \mathcal{B}^*(\mathcal{B}({\overline{X}})-d) -M_2-M_3^{\intercal}, \\
    \nabla_{W_1}\mathcal{L} &= \frac{1}{2}I_{m\times m}-M_1,\\
    \nabla_{W_2}\mathcal{L} &=\frac{1}{2}I_{n\times n}-M_4.
        \end{aligned}
\end{equation*}
These  are  clearly  nulled  by  the  candidate  primal  and  dual variables.
We now show complementary slackness holds.
\begin{equation}
\label{eqn:compslackness}
\begin{aligned}
    \langle \overline{W}, \overline{M}\rangle_F=& \langle \frac{1}{2}I_{m\times m}, \overline{U}\overline{U}^{\intercal} \rangle_F+\langle \frac{1}{2} I_{n\times n}, \overline{V}^{\intercal}\overline{V} \rangle_F\\
    &+ \langle \mu \mathcal{B}^*(\mathcal{B}(\overline{X})-d) , \overline{U}\overline{V}\rangle_F.
    \end{aligned}
\end{equation}

By equation (\ref{traceequalszero}), (\ref{eqn:compslackness}) becomes 
\begin{equation}
    \frac{1}{2}(||\overline{U}||_F^2+||\overline{V}||_F^2)-||\overline{U}||_F^2=0.
\end{equation}


Primal feasibility is shown by 
\begin{equation}
    \overline{W}= \begin{bmatrix}
        \overline{U}\overline{U}^{\intercal}& \overline{U}\overline{V}\\ \overline{V}^{\intercal}\overline{U}^{\intercal}& \overline{V}^{\intercal}\overline{V}
    \end{bmatrix} = \begin{bmatrix}
        \overline{U} \\
        \overline{V}^{\intercal}
    \end{bmatrix}\begin{bmatrix}
        \overline{U} \\
        \overline{V}^{\intercal}
    \end{bmatrix}^{\intercal} \succeq 0.
\end{equation}

And finally, we show dual feasibility  by Shur's Complement.  Shur's complement gives us that because  $\overline{M}_1$ and $\overline{M}_4$ are positive semidefinite, then $\overline{M}$ is positive semidefinite if and only if 
$$\overline{M}_1-\overline{M}_2\overline{M}_4^{-1}\overline{M}_3=\frac{1}{2}I_{m\times m}-2\overline{M}_2\overline{M}_2^{\intercal}\succeq 0,$$
which is validated by our assumption 
$$||\mu \mathcal{B}^*(\mathcal{B}(\overline{X})-d) ||_2 \leq 1.$$

\end{proof}

\end{document}